\documentclass[12pt,reqno]{amsart}

\usepackage{amssymb,latexsym}

\usepackage{enumerate}

\usepackage[french,english]{babel}
\usepackage{amsmath}
\usepackage{graphicx}
\usepackage{amssymb}
\usepackage{bbm}
\usepackage{amsthm,mathtools}
\usepackage{ulem}
\usepackage{geometry}
\usepackage{tikz-cd}
\usepackage{mathrsfs}
\usepackage[colorinlistoftodos]{todonotes}
\usepackage{enumitem}
\usepackage{verbatim}
\usepackage[foot]{amsaddr}
\usepackage{dsfont}

\makeatletter

\@namedef{subjclassname@2010}{
	
	\textup{2020} Mathematics Subject Classification}

\author{Zikang Dong\textsuperscript{1}}
\author{Zhonghua Li\textsuperscript{2}}
\author{Yutong Song\textsuperscript{2}}
\author{Shengbo Zhao\textsuperscript{2}}
\address{1. School of Mathematical Sciences, Soochow University, Suzhou 215006, P. R. China}
	\email{zikangdong@gmail.com}
    \email{zhonghua\_li@tongji.edu.cn}
	\email{99yutongsong@gmail.com}
	\email{shengbozhao@hotmail.com}
	\date{\today}
\address{2. School of Mathematical Sciences, Key Laboratory of Intelligent Computing and Applications(Ministry of Education), Tongji University, Shanghai 200092, P. R. China}
\makeatother
\newtheorem{thm}{Theorem}[section]
\newtheorem*{thm*}{Theorem}

\newtheorem{prop}{Proposition}[section]
\newtheorem{lem}[thm]{Lemma}

\theoremstyle{definition}

\newtheorem{rem}[thm]{Remark}

\numberwithin{equation}{section}

\usepackage{hyperref}
\hypersetup{hypertex=true,colorlinks=true,linkcolor=blue,anchorcolor=blue,citecolor=blue}
\frenchspacing

\textwidth=15.5cm

\textheight=22.1cm

\parindent=16pt

\oddsidemargin=0cm

\evensidemargin=0cm

\topmargin=0cm

\newcommand{\newabstract}[1]{%
	\par\bigskip
	\csname otherlanguage*\endcsname{#1}%
	\csname captions#1\endcsname
	\item[\hskip\labelsep\scshape\abstractname.]
}

\begin{document}
\title[Large values of quadratic Dirichlet $L$-functions near the central point]{Large values of quadratic Dirichlet $L$-functions near the central point}
\cleardoublepage \pagestyle{myheadings}
\maketitle

\begin{abstract}
    In this paper, we investigate the conditional large values of the quadratic Dirichlet $L$-functions near the central point $s=1/2$. When $\sigma $ closes to $1/2$ within a suitable range, we show that $L(\sigma, \chi_d)$ have the conditional large values of the similar order of magnitude as $L(1/2, \chi_d)$.
\end{abstract}

\noindent{\bf Keywords.}\ Quadratic Dirichlet $L$-functions, extreme values, central values, long resonance method. 

\noindent{\bf Mathematics Subject Classification(2020).}\ 11M06, 11M26, 11N37.

\bigskip

\section{Introduction}
The study of the distribution of values of the Riemann zeta function and Dirichlet $L$-functions has been one of the important topics in analytic number theory. 
In recent years, the research on their extreme values has gotten many breakthroughs. Soundararajan \cite{soundararajan2000nonvanishing} proposed a more effective approach, named the resonance method, to replace the previous moment method and derived larger magnitudes of extreme values for the Riemann zeta function as follows 
$$
\max_{T \leq t \leq 2T}\Big| \zeta \Big( \frac{1}{2} + it \Big) \Big| \geq \exp \bigg( (1 + o(1)) \sqrt{\frac{\log T}{\log \log T}} \bigg),
$$
where $T$ is large.
\par
Following Aistleitner's ideas in \cite{aistleitner2016lower}, Bondarenko and Seip \cite{bondarenko2017large,bondarenko2018argument} improved the resonance method and refined Soundararajan's result by a factor of $\sqrt{\log \log \log T}$ in the exponent. Specifically, they obtained the following extreme values
$$
\max_{0 < t \leq T}\Big| \zeta \Big( \frac{1}{2} + it \Big) \Big| \geq \exp \bigg( (1 + o(1)) \sqrt{\frac{\log T \log \log \log T}{\log \log T}} \bigg).
$$
Incidentally, de la Bretèche and Tenenbaum \cite{tenen2019galsum} improved the factor from $(1+o(1))$ to $(\sqrt{2} + o(1))$ in the above result.
\par
The resonance method has an obvious limitation when dealing with similar problems concerning various families of $L$-functions, namely that the coefficients must be positive. Let $X$ be sufficiently large, Soundararajan \cite{soundararajan2008extreme} considered the quadratic Dirichlet $L$-functions and showed
\begin{equation}
    \max _{\substack{X<|d| \leq 2X \\ d \in \mathcal{F}}} L \Big( \frac{1}{2}, \chi_d \Big) \geq \exp \bigg( \left( \frac{1}{\sqrt{5}} + o(1) \right) \sqrt{\frac{{\log X}}{\log \log X}} \bigg). \nonumber
\end{equation}
Here, $\mathcal{F}$ denotes the set of all fundamental discriminants , i.e.
$$
\mathcal{F} := \{d: d \,\, \text{is a fundamental discriminant} \},
$$
and $\chi_d = \big( \frac{d}{\cdot}\big)$ denotes the real primitive character associated to the fundamental discriminant $d$.
\par
The study of the quadratic Dirichlet $L$-functions at the central point can be traced back to the unpublished results of Heath-Brown. He showed that for arbitrarily large $d \in \mathcal{F}$
$$
L \Big( \frac{1}{2}, \chi_d \Big) \gg \exp \bigg( c\frac{\sqrt{\log |d|}}{\log \log |d|} \bigg),
$$
where $c$ is a positive constant. For more details, we refer to \cite{hoffstein66omega}.
\par
Recently, Darbar and Maiti \cite{darbar2025large} employed the long resonator method from \cite{aistleitner2016lower} to obtain conditional large values of the quadratic Dirichlet $L$-functions. They proved that under the Generalized Riemann hypothesis (GRH), one has
$$
\max _{\substack{X<|d| \leq 2X \\ d \in \mathcal{F}}} L \Big( \frac{1}{2}, \chi_d \Big) \geq \exp \bigg( \bigg( \frac{1}{2} + o(1) \bigg) \sqrt{\frac{\log X \log \log \log X}{\log \log X}} \bigg).
$$
Their result matches the form of the currently optimal extreme values of the Riemann zeta function established by Bondarenko and Seip in \cite{bondarenko2017large,bondarenko2018argument}. Under GRH, they improved Soundararajan's result above. Furthermore, for $\sigma \in (1/2, 1)$, they showed that
$$
\max _{\substack{|d| \leq X \\ d \in \mathcal{F}}} L(\sigma, \chi_d) \geq \exp \bigg( (c +o(1)) \frac{(\log X)^{1-\sigma} }{(\log \log X)^{\sigma}}  \bigg),
$$
where $c$ is an explicit constant (See \cite[Lemma 3]{darbar2024large}).
They also computed the proportion of fundamental discriminants $d$ for which large values of $L(\sigma, \chi_d)$ occur when $\sigma \in [1/2, 1]$.
\par
Indeed, based on the random matrix theory, Farmer, Gonek, and Hughes \cite{farmer2007maximum} conjectured that 
$$
\max _{\substack{|d| \leq X \\ d \in \mathcal{F}}} \bigg| L \bigg( \frac{1}{2}, \chi_d \bigg) \bigg| = \exp \left( (1 + o(1)) \sqrt{\log X \log \log X} \right).
$$
For further results and details about the quadratic Dirichlet $L$-functions we refer to \cite{conrey2002real,darbar2024large,granville2003distribution,hoffstein66omega,soundararajan2000nonvanishing} and the references therein.
\par
Inspired by Chirre \cite{chirre2019extreme}, we investigate the conditional large values of the quadratic Dirichlet $L$-functions near the central point. Following what was done in \cite{dong2025Onde}, we investigate the conditional large values of $L(\sigma, \chi_d)$ when $0 < \sigma - 1/2 \ll (\log \log X)^{-1}$. We utilize the long resonator method and derive the following Theorem \ref{thm1.1}.
\begin{thm}
    \label{thm1.1}
    Assume GRH is true. Let $A$ be any positive real number and $X$ be sufficiently large. Set
    $$\sigma=\frac{1}{2} + \frac{A}{\log \log X},$$
    then we have
    \begin{equation}
        \label{thm1}
        \max _{\substack{X<|d| \leq 2X \\ d \in \mathcal{F}}} L(\sigma, \chi_d) \geq \exp \bigg((\lambda(A)+o(1)) \sqrt{\frac{\log X  \log \log \log X }{\log \log X }}\bigg),
    \end{equation}
    where 
    $$\lambda(A) = \frac{1}{2(e-1)e^A}.$$
\end{thm}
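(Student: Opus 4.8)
The plan is to run the long resonator method in the quadratic family, as developed by Soundararajan and Bondarenko--Seip and carried out at the central point by Darbar--Maiti, with the resonator and the underlying set of primes tuned to the shifted point $\sigma=\tfrac12+A/\log\log X$. Fix a smooth nonnegative weight $\Phi$ supported in $[1,2]$, not identically zero, and for a resonator $R(d)=\sum_{\ell\in\mathcal M}r(\ell)\chi_d(\ell)$ with $\mathcal M$ a finite set of squarefree integers and $r(\ell)\ge 0$, put
\[
\mathcal M_1:=\sum_{d\in\mathcal F}L(\sigma,\chi_d)\,R(d)^2\,\Phi\!\left(\frac{|d|}{X}\right),\qquad
\mathcal M_2:=\sum_{d\in\mathcal F}R(d)^2\,\Phi\!\left(\frac{|d|}{X}\right),
\]
the sums running over fundamental discriminants (a routine device recasts them as sums over squarefree moduli). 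Since $R(d)^2\Phi(|d|/X)\ge 0$ and the support of $\Phi$ localizes $|d|$ to $(X,2X)$, one has $\max_{X<|d|\le 2X,\,d\in\mathcal F}L(\sigma,\chi_d)\ge \mathcal M_1/\mathcal M_2$, so it suffices to bound this quotient from below. For the numerator one first replaces $L(\sigma,\chi_d)$ by a Dirichlet polynomial via the smoothed approximate functional equation, $L(\sigma,\chi_d)=\sum_{n}\chi_d(n)\,n^{-\sigma}V\!\big(n/\sqrt{|d|}\big)+\text{(error)}$, the error being controlled under GRH; since $\sigma-\tfrac12=o(1)$, both the admissible truncation and the quality of this approximation are as in the case $\sigma=\tfrac12$.

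Next one constructs the resonator. Let $\mathcal P$ be a set of primes $p$ in a window around $\log X$, chosen so that $\log p\sim\log\log X$ throughout $\mathcal P$, and let $\mathcal M$ be the set of squarefree products of at most $k$ primes from $\mathcal P$, with multiplicative weights $r(\ell)=\prod_{p\mid\ell}g(p)$; the window, the bound $k$ on the number of prime factors, and the weight $g$ are parameters to be optimized, with $k$ constrained so that $\max_{\ell\in\mathcal M}\ell$ and $\sum_{\ell}r(\ell)$ stay within the resonator length that the error analysis can absorb --- here GRH is used, in order to afford a resonator long enough to gain the factor $\sqrt{\log\log\log X}$. Keeping $\mathcal P$ at the scale $\log p\sim\log\log X$ is exactly what makes the estimate uniform in $A$: it forces the shift-induced factor $p^{-(\sigma-1/2)}=\exp\!\big(-A\log p/\log\log X\big)$ to be, for every $p\in\mathcal P$, a constant of size $\asymp e^{-A}$ --- at the mild cost that the constant obtained here at $A=0$ is $\tfrac{1}{2(e-1)}$ rather than the sharper value that is available exactly at the central point.

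Expanding $R(d)^2=\sum_{\ell_1,\ell_2}r(\ell_1)r(\ell_2)\chi_d(\ell_1\ell_2)$, and in $\mathcal M_1$ also the Dirichlet polynomial for $L(\sigma,\chi_d)$, reduces both moments to character averages $\sum_{d\in\mathcal F}\chi_d(m)\Phi(|d|/X)$. These are evaluated by Poisson summation over $d$ in the manner of Heath-Brown and Soundararajan: the main term, of size $\asymp X\prod_{p\mid m}(1+1/p)^{-1}$, occurs exactly when $m$ is a perfect square, and the complementary (non-square) contribution is shown, under GRH and using the length budget above, to be $o$ of the main terms after summation against the coefficients. Writing $a=(\ell_1,\ell_2)$, the square condition on $m=n\ell_1\ell_2$ forces $n=(\ell_1\ell_2/a^2)t^2$ with $t\ge 1$, the term $t=1$ dominating; consequently
\[
\frac{\mathcal M_1}{\mathcal M_2}\;=\;(1+o(1))\;
\frac{\displaystyle\sum_{\ell_1,\ell_2\in\mathcal M}r(\ell_1)r(\ell_2)\,\frac{(\ell_1,\ell_2)^{2\sigma}}{(\ell_1\ell_2)^{\sigma}}\prod_{p\mid \ell_1\ell_2}\Big(1+\tfrac1p\Big)^{-1}}
{\displaystyle\sum_{\ell\in\mathcal M}r(\ell)^2\prod_{p\mid\ell}\Big(1+\tfrac1p\Big)^{-1}}\,,
\]
a GCD-type ratio with GCD-exponent $2\sigma$.

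It remains to bound this ratio. Running the Bondarenko--Seip combinatorial argument (organizing $\ell$ by its prime multiset and exploiting the multiplicativity of $r$ together with the bound on the number of prime factors) gives a lower bound for the ratio governed by weighted prime sums $\sum_{p\in\mathcal P}g(p)\,p^{-\sigma}$; since $p^{-\sigma}=p^{-1/2}\exp(-A\log p/\log\log X)=(1+o(1))\,e^{-A}\,p^{-1/2}$ uniformly for $p\in\mathcal P$, the whole $\sigma=\tfrac12$ analysis carries over with an overall factor $e^{-A}$. Optimizing the free parameters (the prime window, the parameter $k$, and $g$) then reproduces the base constant $\tfrac{1}{2(e-1)}$ --- with $e-1$ emerging from the extremal choice of resonator --- multiplied by $e^{-A}$, i.e.
\[
\log\frac{\mathcal M_1}{\mathcal M_2}\;\ge\;\big(\lambda(A)+o(1)\big)\sqrt{\frac{\log X\,\log\log\log X}{\log\log X}},\qquad \lambda(A)=\frac{1}{2(e-1)e^{A}},
\]
which, combined with $\max L(\sigma,\chi_d)\ge \mathcal M_1/\mathcal M_2$, gives the theorem. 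The main obstacle is the interplay between the last two displays: one needs the resonator long enough for the Bondarenko--Seip gain while keeping the Poisson error terms --- which now also carry the Dirichlet-polynomial variable $n$ --- negligible under GRH, and at the same time one must re-do the extremal computation with the $\sigma$-tilted weights carefully enough to extract the precise constant $\lambda(A)$ rather than merely its order of magnitude.
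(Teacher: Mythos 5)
Your proposal takes essentially the same route as the paper: a Bondarenko--Seip style resonator supported on primes with $\log p\sim\log_2 X$, the approximate functional equation of Young, GRH-conditional control of the non-square (off-diagonal) contribution, and the key observation that the shift $\sigma-\tfrac12=A/\log_2 X$ produces a uniform factor $e^{-A}$ on the scale of the resonator primes, so the $\sigma=\tfrac12$ optimization carries over with constant $\tfrac{1}{2(e-1)}e^{-A}$. The paper's implementation differs only cosmetically --- it invokes the Darbar--Maiti GRH character-sum lemma rather than Poisson summation, retains (rather than discards as error) the nonnegative second term $Y_{\sigma_A}J_{-\sigma_A}$ of Young's two-term functional equation, and uses the graded $\mathcal{M}_k/\Delta_k$ definition of the resonator support rather than a single cap on the number of prime factors --- but the structure of the argument and the source of $\lambda(A)$ are identical.
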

\par
Compared with \cite[Theorem 1]{darbar2025large}, our Theorem \ref{thm1.1} shows that when $0< \sigma - 1/2 \ll (\log \log X)^{-1}$, the conditional large values of $L(\sigma, \chi_d)$ can reach the similar order of magnitude as those of $L(1/2, \chi_d)$ as $X \to \infty$, though the leading constant in the exponent is slightly smaller. At the end of the proof, we will explain the reason for the emergence of the smaller constant, and also clarify the possibility of improving it.
\par
To simplify the writing, we adopt the following short-hand notations in this paper, such as $\log_2 x \operatorname{:} = \log{\log{x}}$ and $ \log_3 x \operatorname{:} = \log{\log{\log x}}.$ Moreover, let $\mathbb{N}$ be the set of all positive integers. And let $\varepsilon > 0$ be an arbitrarily small number. Note that each appearance of $\varepsilon$ does not necessarily denote the same value. Finally, we use $\square$ to indicate a perfect square number.

\section{Preliminary Lemmas}
In this section, we give some lemmas that we will use later. The first lemma below serves to handle the partial sums of $\chi_d(n)$.
\begin{lem}[\cite{darbar2025large}, Lemma 1]
    \label{lemmadarbar}
    Assume that GRH is true. Let $n=n_0 n_1^2$ be a positive integer with $n_0$ the square-free part of $n$. Then for any $\varepsilon>0$, we obtain
    $$
    \sum_{\substack{|d| \leq X \\ d \in \mathcal{F}}} \chi_d(n)=\frac{X}{\zeta(2)} \prod_{p \mid n}\Big(\frac{p}{p+1}\Big) \mathrm{1}_{n=\square}+O\Big(X^{\frac{1}{2}+\varepsilon} g_1\left(n_0\right) g_2\left(n_1\right)\Big),
    $$
    where $g_1(n_0)=\exp \big(\left(\log n_0\right)^{1-\varepsilon} \big)$, $g_2\left(n_1\right)=\sum_{q \mid n_1} \mu^2(q) q^{-(1 / 2+\varepsilon)}$, and $\mathrm{1}_{n=\square}$ indicates the indicator function of the square numbers.
\end{lem}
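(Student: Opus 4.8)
\medskip
\noindent\textbf{Proof proposal.}\ To establish the lemma, the strategy is the classical one for averages of quadratic characters over fundamental discriminants, combined with the conditional bound for incomplete character sums that GRH supplies. Write $n=n_0n_1^2$ with $n_0$ squarefree; since $\chi_d(n)=\left(\frac{d}{n_0}\right)\mathbf{1}_{(d,n_1)=1}$ (Kronecker symbol) and $\left(\frac{d}{n_0}\right)$ vanishes unless $(d,n_0)=1$, it suffices to understand $\sum_{d}\left(\frac{d}{n_0}\right)$, the sum being over fundamental discriminants $d$ with $|d|\le X$ and $(d,n)=1$. Every such $d$ is, up to sign, either an odd squarefree integer $\equiv 1\pmod{4}$ or $4$ times a squarefree integer $\equiv 2,3\pmod{4}$; I would treat $d>0$ and $d<0$ separately, the sign contributing only the fixed factor $\left(\frac{-1}{n_0}\right)$. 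Detecting squarefreeness by $\mu^2(m)=\sum_{b^2\mid m}\mu(b)$ and the condition $(d,n_1)=1$ by $\sum_{q\mid (d,n_1)}\mu(q)$, and using complete multiplicativity of the Kronecker symbol in its top entry, the whole sum reduces to a finite combination --- indexed by $b,q$ with $b^2q\ll X$ and $q\mid n_1$ squarefree, weighted by $\mu(b)\mu(q)\left(\frac{q}{n_0}\right)$ --- of incomplete sums $\sum_{\ell\le X/(b^2q)}\left(\frac{\ell}{n_0}\right)$ over $\ell$ in a fixed residue class modulo $8$. Here the structural input is quadratic reciprocity for the Kronecker symbol: for $n_0$ squarefree, $\ell\mapsto\left(\frac{\ell}{n_0}\right)$ agrees, up to a sign depending only on $\ell$ and $n_0$ modulo $8$, with a real Dirichlet character of conductor dividing $4n_0$, and this character is principal precisely when $n_0=1$, that is, when $n=\square$. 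Removing the residue condition on $\ell$ by orthogonality over the characters modulo $8$, each inner sum becomes an $O(1)$-linear combination of sums $\sum_{\ell\le Y}\widetilde{\psi}(\ell)$ with $\widetilde{\psi}$ a real character of conductor $\ll n_0$, non-principal whenever $n_0>1$.

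Case $n=\square$: then $n_0=1$, every $\widetilde{\psi}$ above is either principal or of modulus dividing $8$, so each inner sum is either a count of integers up to $X/(b^2q)$ in a progression coprime to $n$, or is $O(1)$. Reassembling the counts gives the density of fundamental discriminants $|d|\le X$ with $(d,n)=1$, which a routine Euler-product evaluation identifies as $\frac{X}{\zeta(2)}\prod_{p\mid n}\frac{p}{p+1}$; the error comes from truncating the $b$-sieve at $b\ll\sqrt X$ together with the coprimality sieve, and is absorbed into $O\!\left(X^{1/2+\varepsilon}g_1(1)g_2(n_1)\right)$. No use of GRH is needed in this case.

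Case $n\ne\square$: then $n_0>1$, every $\widetilde{\psi}$ above is non-principal, so there is no main term and one must bound the total by $O\!\left(X^{1/2+\varepsilon}g_1(n_0)g_2(n_1)\right)$. This is where GRH is used: for a non-principal real character $\widetilde{\psi}$ of conductor $\ll n_0$, moving the (smoothed) Perron integral for $\sum_{\ell\le Y}\widetilde{\psi}(\ell)$ to the line $\re s=\tfrac12$ --- legitimate under GRH, with no residue --- and using the Lindel\"of-type bound together with the mean-square estimate for $L(s,\widetilde{\psi})$ on the critical line yields $\sum_{\ell\le Y}\widetilde{\psi}(\ell)\ll Y^{1/2+\varepsilon}$ with a dependence on the conductor that is sub-polynomial and hence comfortably absorbed by $g_1(n_0)=\exp((\log n_0)^{1-\varepsilon})$. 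Applying this with $Y=X/(b^2q)$ gives $\ll X^{1/2+\varepsilon}(b^2q)^{-1/2-\varepsilon}g_1(n_0)$ for the $(b,q)$-term; the sum over $b\ll\sqrt X$ costs only $\sum_b b^{-1-2\varepsilon}\ll 1$, and $\sum_{q\mid n_1}\mu^2(q)q^{-1/2-\varepsilon}=g_2(n_1)$ is exactly the remaining factor, giving the claim.

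The main difficulty is not any single estimate but the uniformity in $n$: one has to carry the conductors and the many sign factors correctly through quadratic reciprocity and the orthogonality over characters modulo $8$ --- in particular handling the prime $2$, which intervenes both in the definition of the fundamental discriminants and in the conductor of $\left(\frac{\cdot}{n_0}\right)$ --- and one has to apply the conditional character-sum bound with explicit, uniform control of its conductor dependence so that the two harmless factors $g_1(n_0)$ and $g_2(n_1)$ emerge cleanly. The remaining steps are routine manipulations with multiplicative functions.
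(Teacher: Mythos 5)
The paper does not actually prove this lemma; it is imported verbatim from Darbar--Maiti, where the argument is the Perron-formula/contour-shift argument you are gesturing at. Your overall architecture --- parametrize fundamental discriminants, detect squarefreeness with $\mu^2(m)=\sum_{b^2\mid m}\mu(b)$, remove $(d,n_1)=1$ by M\"obius over $q\mid n_1$ (which is indeed exactly where $g_2(n_1)$ comes from), apply reciprocity, and separate the principal case $n_0=1$ from the non-principal case --- is the right one and matches the source.

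The genuine gap is in the non-square case, in how you obtain the factor $g_1(n_0)$. You shift the Perron integral all the way to $\re s=\tfrac12$ and invoke Lindel\"of in the conductor aspect; that yields a conductor dependence of size $q^{\varepsilon}\asymp n_0^{\varepsilon}$, and even the optimal GRH bound on the critical line is only $\exp(C\log n_0/\log\log n_0)$. You then assert that a ``sub-polynomial'' conductor dependence is ``comfortably absorbed by $g_1(n_0)=\exp((\log n_0)^{1-\varepsilon})$''. This is backwards: $g_1(n_0)=o(n_0^{\varepsilon})$ and also $g_1(n_0)=o\bigl(\exp(c\log n_0/\log\log n_0)\bigr)$, so neither bound your method produces is majorized by $g_1(n_0)$; as written you would only prove the lemma with $g_1(n_0)$ replaced by $n_0^{\varepsilon}$, a strictly weaker statement (which, as it happens, would still suffice for the application in Section 3, but is not the statement). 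To get the stated $g_1(n_0)$ one must stop the contour at $\re s=\tfrac12+\varepsilon$ and use the GRH bound $\log L(\sigma+it,\psi)\ll(\log(q(|t|+2)))^{2-2\sigma+o(1)}$, which at $\sigma=\tfrac12+\varepsilon$ gives $|L(s,\psi)|\ll\exp((\log n_0)^{1-\varepsilon})(1+|t|)^{o(1)}$; this is where $g_1$ actually comes from. A second, smaller, imprecision: after twisting by the characters mod $8$ it is not true that every resulting $\widetilde\psi$ is non-principal when $n_0>1$ (consider $n_0=2$); rather, the principal-character contributions cancel when the residue classes are recombined --- part of the ``prime $2$'' bookkeeping you rightly flag but do not carry out.
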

\par
The next one provides the approximate functional equation of the quadratic Dirichlet $L$-functions within the critical strip.
\begin{lem}[\cite{young2009}, Proposition 2.1]
     \label{lemmayoung}
      Let $\alpha$ be a positive parameter. And let $G(s)$ be any entire, even function bounded in any strip $|\operatorname{Re}(s)| \leq M$ which normalized by $G(0)=1$. Then for $d \in \mathcal{F}$, we have
      \begin{equation}
           L\big(\tfrac{1}{2}+\alpha, \chi_d \big)=\sum_{n \geq 1} \frac{\chi_{d}(n)}{n^{\frac{1}{2}+\alpha}} U_\alpha \Big(\frac{n}{\sqrt{d}} \Big)+Y_\alpha \sum_{n \geq 1} \frac{\chi_{d}(n)}{n^{\frac{1}{2}-\alpha}} U_{-\alpha}\Big(\frac{n}{\sqrt{d}}\Big),
      \end{equation}
      where 
      $$U_\alpha(x)=\frac{1}{2 \pi i} \int_{(1)} \frac{G(s)}{\pi^{\frac{s}{2}}} \frac{\Gamma\big(\frac{\frac{1}{2}+\alpha+s}{2}\big)}{\Gamma \big(\frac{\frac{1}{2}+\alpha}{2} \big)} x^{-s} \frac{\mathrm{d}s}{s} \quad \text{and} \quad Y_\alpha=\Big(\frac{d}{\pi}\Big)^{-\alpha} \frac{\Gamma\big(\frac{\frac{1}{2}-\alpha}{2}\big)}{\Gamma\big(\frac{\frac{1}{2}+\alpha}{2}\big)}.$$
    
\end{lem}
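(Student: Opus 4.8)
\emph{Proof strategy.} We derive the identity from the functional equation of the completed $L$-function by the classical contour-shift (Mellin) argument that proves approximate functional equations for $\zeta$ and for Dirichlet $L$-functions. The argument is unconditional, and $G$ is used only through the stated properties (entire, even, $G(0)=1$, bounded in vertical strips), the genuine decay needed for every contour manipulation being supplied by the archimedean factors. Fix $\alpha>0$. For a positive fundamental discriminant $d$ the character $\chi_d$ is real, primitive and even, so setting $\Lambda(s,\chi_d)=(d/\pi)^{s/2}\Gamma(s/2)L(s,\chi_d)$ one has that $\Lambda(\cdot,\chi_d)$ is entire and satisfies $\Lambda(s,\chi_d)=\Lambda(1-s,\chi_d)$, the root number of a real primitive character being $+1$; for $d<0$ the character $\chi_d$ is odd and one makes the obvious parity replacements ($\Gamma((s+1)/2)$ for $\Gamma(s/2)$, $\sqrt{|d|}$ for $\sqrt d$), everything else being unchanged.

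\emph{Step 1: the first Dirichlet series.} Put
\[
J:=\frac{1}{2\pi i}\int_{(2)}\Lambda\Big(\tfrac12+\alpha+s,\chi_d\Big)\frac{G(s)}{s}\,\mathrm{d}s.
\]
On the line $\re(s)=2$ we may expand $L(\tfrac12+\alpha+s,\chi_d)=\sum_{n\ge1}\chi_d(n)n^{-1/2-\alpha-s}$ absolutely. Substituting this, using the identity $(d/\pi)^{(1/2+\alpha+s)/2}n^{-s}=(d/\pi)^{(1/2+\alpha)/2}\big(\pi^{1/2}n/\sqrt d\big)^{-s}$, interchanging summation and integration (justified by absolute convergence together with the exponential decay of $\Gamma((\tfrac12+\alpha+s)/2)$ in $\im(s)$ from Stirling), and shifting the inner integral back to $\re(s)=1$, where its integrand is holomorphic, one recognizes
\[
J=\Big(\tfrac d\pi\Big)^{(\frac12+\alpha)/2}\Gamma\Big(\tfrac{\frac12+\alpha}{2}\Big)\sum_{n\ge1}\frac{\chi_d(n)}{n^{1/2+\alpha}}\,U_\alpha\Big(\frac n{\sqrt d}\Big),
\]
with $U_\alpha$ exactly as in the statement; its defining integral converges by the same Stirling estimate, and pushing that integral far to the right yields $U_\alpha(x)\ll_M x^{-M}$ for every $M$, so the $n$-series above converges absolutely.

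\emph{Step 2: contour shift and the functional equation.} Move the contour in $J$ from $\re(s)=2$ to $\re(s)=-2$. Since $\Lambda(\cdot,\chi_d)$ and $G$ are entire, the only pole crossed is the simple pole of $G(s)/s$ at $s=0$, of residue $\Lambda(\tfrac12+\alpha,\chi_d)G(0)=\Lambda(\tfrac12+\alpha,\chi_d)$; the horizontal segments at height $\pm T$ vanish as $T\to\infty$, since $\Lambda(\tfrac12+\alpha+s,\chi_d)$ decays exponentially in $T$ on them (Stirling for the Gamma factor beats the at most polynomial growth of $L$ on vertical lines) while $G(s)/s$ stays bounded. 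Hence
\[
J=\Lambda\Big(\tfrac12+\alpha,\chi_d\Big)+\frac{1}{2\pi i}\int_{(-2)}\Lambda\Big(\tfrac12+\alpha+s,\chi_d\Big)\frac{G(s)}{s}\,\mathrm{d}s.
\]
In this last integral substitute $s\mapsto -s$ and use $G(-s)=G(s)$ to rewrite it as $-\frac{1}{2\pi i}\int_{(2)}\Lambda(\tfrac12+\alpha-s,\chi_d)\,G(s)s^{-1}\,\mathrm{d}s$, then apply the functional equation in the form $\Lambda(\tfrac12+\alpha-s,\chi_d)=\Lambda(\tfrac12-\alpha+s,\chi_d)$. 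The resulting integral is precisely the one of Step 1 with $\alpha$ replaced by $-\alpha$ (push the contour a little further right if $\alpha$ is large), so it equals $(d/\pi)^{(\frac12-\alpha)/2}\Gamma\big(\tfrac{\frac12-\alpha}{2}\big)\sum_{n\ge1}\chi_d(n)n^{-1/2+\alpha}U_{-\alpha}(n/\sqrt d)$. Combining with Step 1,
\[
\Big(\tfrac d\pi\Big)^{(\frac12+\alpha)/2}\Gamma\Big(\tfrac{\frac12+\alpha}{2}\Big)\sum_{n\ge1}\frac{\chi_d(n)U_\alpha(n/\sqrt d)}{n^{1/2+\alpha}}=\Lambda\Big(\tfrac12+\alpha,\chi_d\Big)-\Big(\tfrac d\pi\Big)^{(\frac12-\alpha)/2}\Gamma\Big(\tfrac{\frac12-\alpha}{2}\Big)\sum_{n\ge1}\frac{\chi_d(n)U_{-\alpha}(n/\sqrt d)}{n^{1/2-\alpha}}.
\]
Dividing both sides by $(d/\pi)^{(\frac12+\alpha)/2}\Gamma\big(\tfrac{\frac12+\alpha}{2}\big)$ turns the first right-hand term into $L(\tfrac12+\alpha,\chi_d)$ (by the definition of $\Lambda$) and the prefactor of the second into $(d/\pi)^{-\alpha}\Gamma\big(\tfrac{\frac12-\alpha}{2}\big)/\Gamma\big(\tfrac{\frac12+\alpha}{2}\big)=Y_\alpha$; transposing the sign then gives exactly the claimed formula.

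\emph{Where the work is.} The functional-equation algebra is routine and mechanical; the substance is the analytic bookkeeping — that $U_{\pm\alpha}$ and the two $n$-series are well defined and absolutely convergent (the bound $U_{\pm\alpha}(x)\ll_M x^{-M}$), that summation and integration may be interchanged, and, above all, that the horizontal portions of the contour vanish in Step 2 so that the shift is legitimate. This last point is the only place requiring genuine input, namely Stirling's asymptotics for the Gamma factor together with a polynomial (convexity-type) bound for $L(s,\chi_d)$ on vertical lines.
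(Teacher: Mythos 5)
Your derivation is correct and is exactly the standard contour-shift/functional-equation argument behind the cited result; the paper itself gives no proof here, simply quoting \cite[Proposition 2.1]{young2009} (equivalently \cite[Theorem 5.3]{iwaniec2004analytic}), whose proof is the one you wrote. The only caveat worth recording is that the formula implicitly requires $\alpha$ small enough (certainly $\alpha<1/2$) so that $Y_\alpha$ is finite and the contour $\operatorname{Re}(s)=1$ in $U_{-\alpha}$ stays to the right of the poles of $\Gamma\bigl(\tfrac{1/2-\alpha+s}{2}\bigr)$ — a condition amply satisfied by the paper's choice $\alpha=\sigma_A=A/\log_2 N$.
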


\section{Proof of Theorem \ref{thm1.1}}
\subsection{Constructing the resonator}
We use the resonance method to study large values of $L$-functions near the central point. Let $N$ be a large number that will be chosen later, and 
$$\sigma = \frac{1}{2} + \frac{A}{\log_ 2 N}.$$
Following the ideas of \cite{bondarenko2017large,bondarenko2018argument,bondarenko2018note,chirre2019extreme,darbar2025large}, we proceed with the following construction.
 Let $\gamma \in (0,(e-1)^{-1})$ be a parameter to be chosen later. Define $\mathcal{P}$ as the set of prime numbers $p$ such that
$$
e \log N \log_{2}N < p \leq \exp\left(\left(\log_2 N\right)^{\gamma}\right) \log N \log_{2}N.
$$
Then define the multiplicative function $f(n)$ to be supported on the set of square-free numbers, with values for $p \in \mathcal{P}$ as
$$
f(p) = \bigg( \frac{(\log N)^{1 - \sigma} (\log_2 N)^{\sigma}}{(\log_3 N)^{1 - \sigma}} \bigg) \frac{1}{p^{\sigma} (\log p - \log_2 N - \log_3 N )}.
$$
If $p \notin \mathcal{P}, \,f(p)=0.$ 
For  $k \in \left\{1, \cdots, \lfloor(\log_2 N)^{\gamma}\rfloor\right\},$ we define $\mathcal{P}_{k}$ as the set of prime numbers $p$ such that 
$$e^k \log N \log_{2}N < p \leq e^{k+1} \log N\log_{2}N.$$
Fix a real number $a$ with the condition $e-1 < a < 1/\gamma,$ then define 
$$\mathcal{M}_{k} := \left\{ n \in \mathrm{supp}(f) : n \text{ has at least } \Delta_k := \frac{a (\log N)^{2 - 2\sigma}}{k^2 (\log_3 N)^{2 - 2\sigma}} \text{ prime divisors in } \mathcal{P}_{k} \right\}.$$ 
Let $\mathcal{M}^{\prime}_{k}$ be the set of integers from $\mathcal{M}_{k}$ that have prime divisors only in $\mathcal{P}_{k}.$ Set
$$
\mathcal{M} := \mathrm{supp}(f) \setminus \bigcup_{k = 1}^{\lfloor(\log_2 N)^{\gamma}\rfloor} \mathcal{M}_{k}.
$$
Thus, $\mathcal{M}$ is divisor closed. Specifically, if $m^{\prime} \mid m \in \mathcal{M},$ then $m^{\prime} \in \mathcal{M}.$ Furthermore, according to the similar process as \cite[Lemma 8]{chirre2019extreme}, we have 
\begin{align*}
    |\mathcal{M}| & \leq \exp ((a \gamma + o(1))(\log N)^{2-2\sigma}(\log_3 N)^{2\sigma -1} ) \\
    & \leq \exp ((a \gamma + o(1))\log N(\log_3 N)^{\frac{2A}{\log _2 N}}) \leq N.
\end{align*}
Finally, in view of the definition of $\mathcal{M},$ we can bound every $m \in \mathcal{M}$ by 
\begin{equation}
    \label{factorMbound}
    m \ll \exp \bigg(\frac{\log N \log_3 N}{\log_2 N} \bigg).
\end{equation}
\par
According to the heuristic description by Bondarenko and Seip in \cite{bondarenko2017large}, it is further necessary to construct a set $\mathcal{M}^{\prime}$ such that certain parts contribute larger. However, by \cite{darbar2025large}, our choice is simpler compared with the constructions in \cite{bondarenko2017large,chirre2019extreme}. Directly setting $\mathcal{M}^{\prime}$ to be identical with the above $\mathcal{M}$ enables us to prove Theorem \ref{thm1.1}. 
\par
We define the resonator $R_d$ as 
$$
R_d := \sum_{m \in \mathcal{M}} r(m) \chi_d(m).
$$
Then for $n \in \mathcal{M}^{\prime}$ if we set 
$$
r(n)^2=\sum_{\substack{m \in \mathcal{M}^{\prime} \\ mn=\square}} f(m)^2,$$ by the construction of $\mathcal{M}$ we have $r(n)=f(n),$ so 
$$
R_d = \sum_{m \in \mathcal{M}} f(m) \chi_d(m).
$$
\par
We define multiplicative function $h(n)=\prod_{p \mid n} \big(\frac{p}{p+1}\big)$ for $n \geq 2$ and $h(1)=1.$ 
Set
\begin{align}
    \mathcal{A}_{N} &:= \frac{1}{\sum_{n\in\mathbb{N}} f(n)^2} \sum_{n\in\mathbb{N}} \frac{f(n)}{n^{\sigma}}h(n) \sum_{q\mid n} f(q)q^{\sigma} \nonumber \\
    & = \prod_{p\in \mathcal{P}} \frac{1 + f(p)^2 h(p) + f(p)h(p)p^{-\sigma}}{1 + f(p)^2}.
\end{align}
Then we have the following propositions.
\begin{prop}
    \label{prop3.1}
    We have
    $$\mathcal{A}_{N} \geq \exp\bigg((\delta\gamma+ o(1))\frac{\left(\log N  \right)^{1-\sigma} \left(\log_3 N\right)^{\sigma}}{\left(\log_2 N\right)^{\sigma}}\bigg)$$
for $0 < \delta < 1$ as $N \to +\infty.$
\end{prop}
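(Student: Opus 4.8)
The plan is to estimate $\log\mca_N$ directly from the Euler product
\[
\mca_N=\prod_{p\in\mcp}\frac{1+f(p)^2h(p)+f(p)h(p)p^{-\sigma}}{1+f(p)^2},
\]
showing that each factor contributes, to leading order, just $f(p)h(p)p^{-\sigma}$, and then evaluating $\sum_{p\in\mcp}f(p)p^{-\sigma}$ by a Mertens-type computation; write $L:=(\log N)^{1-\sigma}(\log_3 N)^{\sigma}(\log_2 N)^{-\sigma}$ for the quantity in the statement. First I would record the elementary bounds on $\mcp$: for $p\in\mcp$ one has $\log p>1+\log_2 N+\log_3 N$, so the factor $\log p-\log_2 N-\log_3 N$ in the denominator of $f(p)$ exceeds $1$; and since $\gamma<1$ forces $(\log_2 N)^{\gamma}=o(\log_2 N)$ and $\log_3 N=o(\log_2 N)$, one gets $\log p=(1+o(1))\log_2 N$ uniformly on $\mcp$. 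Feeding these into the definition of $f$ and using $\sigma=\tfrac12+A/\log_2 N$ yields, uniformly for $p\in\mcp$, the bounds $f(p)\ll(\log_3 N)^{-1/2}=o(1)$, $p^{-\sigma}\ll(\log N\,\log_2 N)^{-1/2}$ and $1-h(p)=(p+1)^{-1}\ll(\log N\,\log_2 N)^{-1}$. In particular $1+f(p)^2=1+o(1)$, so with $a_p:=f(p)^2h(p)+f(p)h(p)p^{-\sigma}$ and $b_p:=f(p)^2$ (both $o(1)$) the identity
\[
\log(1+a_p)-\log(1+b_p)=(a_p-b_p)\Bigl(1-\tfrac{a_p+b_p}{2}+\cdots\Bigr)=(a_p-b_p)\bigl(1+O(f(p)^2)\bigr),
\]
together with $a_p-b_p=f(p)h(p)p^{-\sigma}-(p+1)^{-1}f(p)^2$, gives $\log\mca_N=\sum_{p\in\mcp}(a_p-b_p)+O\bigl(\sum_{p\in\mcp}|a_p-b_p|f(p)^2\bigr)$. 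Using the bounds above, the two sums $\sum_{p\in\mcp}(p+1)^{-1}f(p)^2$ and $\sum_{p\in\mcp}f(p)(1-h(p))p^{-\sigma}$ and the remainder term are all $o(L)$, so $\log\mca_N=\sum_{p\in\mcp}f(p)p^{-\sigma}+o(L)$.

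It then remains to evaluate $\sum_{p\in\mcp}f(p)p^{-\sigma}$. Writing $c:=\log_2 N+\log_3 N$ and inserting the definition of $f$, this is a bounded constant multiple of $\sum_{p\in\mcp}p^{-2\sigma}(\log p-c)^{-1}$; since $p^{2\sigma-1}=\exp(2A\log p/\log_2 N)$ stays bounded on $\mcp$, it suffices up to explicit constants to estimate $S:=\sum_{p\in\mcp}(p(\log p-c))^{-1}$. Split $\mcp=\bigsqcup_{k=1}^{\lfloor(\log_2 N)^{\gamma}\rfloor}\mcp_k$, so that $\log p-c\in(k,k+1]$ on $\mcp_k$. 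By Mertens' theorem with the prime-number-theorem error term (the classical $O(1/\log x)$ form is too weak, as the error must beat $1/\log_2 N$), uniformly in $k$,
\[
\sum_{p\in\mcp_k}\frac1p=\log\frac{k+1+c}{k+c}+o\!\Bigl(\frac1{\log_2 N}\Bigr)=\frac{1+o(1)}{k+\log_2 N};
\]
bounding $(\log p-c)^{-1}\ge(k+1)^{-1}$ on $\mcp_k$ and summing (the finitely many smallest $k$ being negligible),
\[
S\ge(1+o(1))\sum_{k=1}^{\lfloor(\log_2 N)^{\gamma}\rfloor}\frac1{(k+1)(k+\log_2 N)}=(1+o(1))\frac{\gamma\log_3 N}{\log_2 N}.
\]
Reassembling with the prefactor (and using $(\log_2 N)^{2\sigma-1}=1+o(1)$) then yields $\log\mca_N\ge(\delta\gamma+o(1))L$ for every fixed $\delta\in(0,1)$, which is the proposition after letting $N\to\infty$.

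I expect the linearisation step to be the main obstacle. Near the lower end of $\mcp$ one has $f(p)^2\asymp(\log_3 N)^{-1}$, which is not negligible, and $\mcp$ holds $\asymp\log N$ primes there, so $\sum_{p\in\mcp}f(p)^2$ — and with it the $O(f(p)^4)$ remainder produced by a careless Taylor expansion of $\log(1+a_p)$ — dwarfs the target $L$. The estimate works only because $f(p)^2$ occurs identically in the numerator and denominator of each Euler factor and cancels, leaving the genuinely small surviving term $a_p-b_p\asymp f(p)p^{-\sigma}$; this is exactly what the factored identity $\log(1+a_p)-\log(1+b_p)=(a_p-b_p)(1+O(f(p)^2))$ makes transparent. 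A secondary point requiring care is the second-order Mertens estimate above, where a power-saving (PNT-strength) error term is needed rather than the classical one.
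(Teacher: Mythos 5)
Your approach is the same as the paper's in outline: linearise the Euler product for $\mathcal{A}_N$ and reduce the problem to estimating $\sum_{p\in\mathcal{P}} f(p)p^{-\sigma}$ over the blocks $\mathcal{P}_k$ by a Mertens-type sum. Two things you do are genuinely more explicit than the paper. First, the paper simply writes $\mathcal{A}_N=\exp\bigl((1+o(1))\sum_{p\in\mathcal{P}}f(p)h(p)p^{-\sigma}\bigr)$ with no comment, while you correctly identify \emph{why} this linearisation is legitimate: the $f(p)^2$ contributions in numerator and denominator must be allowed to cancel before expanding, since $\sum_{p\in\mathcal{P}}f(p)^2\asymp (\log N)^{2-2\sigma}(\log_3 N)^{2\sigma-2}$ already dwarfs the target quantity $L$; your factored identity $\log(1+a_p)-\log(1+b_p)=(a_p-b_p)(1+O(f(p)^2))$ is exactly the right device. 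Second, where the paper simply cites \cite[Lemma 9]{chirre2019extreme} for $\sum_{p\in\mathcal{P}_k}p^{-2\sigma}>(\delta+o(1))(\log_2 N)^{-2\sigma}$, you unpack this and correctly note that the classical $O(1/\log x)$ error in Mertens is too weak here (it is of the same order as the main term $1/(k+\log_2 N)$), so a PNT-strength error is needed.

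However, there is a gap in the final reassembly. You observe that $p^{1-2\sigma}=\exp(-2A\log p/\log_2 N)$ ``stays bounded'' on $\mathcal{P}$ and say you will work ``up to explicit constants'', but you never reinsert this factor. Since $\log p=(1+o(1))\log_2 N$ uniformly on $\mathcal{P}$, one has $p^{1-2\sigma}=e^{-2A}(1+o(1))$, and this is \emph{not} $1+o(1)$ for $A>0$. Carried through, your computation gives
\[
\sum_{p\in\mathcal{P}}\frac{f(p)}{p^{\sigma}}
=\frac{(\log N)^{1-\sigma}(\log_2 N)^{\sigma}}{(\log_3 N)^{1-\sigma}}\sum_{p\in\mathcal{P}}\frac{p^{1-2\sigma}}{p(\log p-c)}
\geq \bigl(e^{-2A}\gamma+o(1)\bigr)L,
\]
which is strictly weaker than the claimed $(\delta\gamma+o(1))L$ for every $\delta<1$ once $A>0$. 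The step in your argument that quietly drops $e^{-2A}$ (writing only ``using $(\log_2 N)^{2\sigma-1}=1+o(1)$'', which handles a different, genuinely negligible factor) does not justify the proposition as stated. The paper's proof at this point relies entirely on the black-box bound from \cite[Lemma 9]{chirre2019extreme}, namely $\sum_{p\in\mathcal{P}_k}p^{-2\sigma}>(\delta+o(1))(\log_2 N)^{-2\sigma}$ for all $0<\delta<1$; your direct Mertens computation instead produces $\sum_{p\in\mathcal{P}_k}p^{-2\sigma}=(e^{-2A}+o(1))(\log_2 N)^{-2\sigma}$. You should either reconcile your derivation with the cited lemma, or acknowledge that your method recovers the proposition only with $\delta<e^{-2A}$, which would in turn change the constant $\lambda(A)$ in the final theorem.
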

\begin{proof}
    Since $f(p) < \left( \log_3 N\right)^{\sigma-1}$ for $p \in \mathcal{P},$ we obtain
    $$
    \mathcal{A}_{N} =\exp \bigg((1+o(1)) \sum_{p \in \mathcal{P}} \frac{f(p)}{p^{\sigma}}\Big(\frac{p}{p+1}\Big)\bigg)=\exp \bigg((1+o(1)) \sum_{p \in \mathcal{P}} \frac{f(p)}{p^{\sigma}}\bigg) .
    $$
    By the definition of $f(p)$, we have
      \begin{align*}
      \sum_{p \in \mathcal{P}} \frac{f(p)}{p^{\sigma}} 
      & =\frac{(\log N)^{1 - \sigma} (\log_2  N)^{\sigma}}{(\log_3 N)^{1 - \sigma}} \sum_{p \in \mathcal{P}} \frac{1}{p^{2\sigma}\left(\log p-\log _2 N-\log _3 N\right)}  \\
      & \geq \frac{(\log N)^{1 - \sigma} (\log_2 N)^{\sigma}}{(\log_3 N)^{1 - \sigma}} \sum_{k=1}^{\lfloor(\log_2 N)^{\gamma}\rfloor} \sum_{p \in \mathcal{P}_k} \frac{1}{(k+1)p^{2\sigma}}.
      \end{align*}
    By \cite[Lemma 9]{chirre2019extreme}, we get the following lower bound
    $$
    \sum_{p \in \mathcal{P}_k} \frac{1}{p^{2\sigma}} > (\delta + o(1))\frac{1}{(\log_2 N)^{ 2\sigma}},
    $$
    where $0< \delta <1.$ So
    $$
    \sum_{p \in \mathcal{P}} \frac{f(p)}{p^{\sigma}} \geq (\delta\gamma+ o(1))\frac{\left(\log N  \right)^{1-\sigma} \left(\log_3 N\right)^{\sigma}}{\left(\log_2 N\right)^{\sigma}}
    $$
    and we complete the proof.
\end{proof}

\begin{prop}
    \label{prop3.2}
    We have
    $$\frac{1}{\sum_{n \in \mathbb{N}} f(n)^2} \sum_{\substack{n \in \mathbb{N} \\ n \notin \mathcal{M}}} \frac{f(n) h(n)}{n^{\sigma}} \sum_{q \mid n} f(q)q^{\sigma}=o\left(\mathcal{A}_N\right) $$
    as $N \to +\infty.$
\end{prop}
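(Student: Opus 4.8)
The plan is to dominate the contribution of $n\notin\mathcal{M}$ by the contributions of the individual bad sets $\mathcal{M}_k$ and to control each of these by a Rankin-type (exponential moment) inequality. Since $f$ is supported on $\mathrm{supp}(f)$, since $\mathrm{supp}(f)\setminus\mathcal{M}=\bigcup_{k=1}^{\lfloor(\log_2 N)^{\gamma}\rfloor}\mathcal{M}_k$ with each $\mathcal{M}_k\subseteq\mathrm{supp}(f)$, and since all summands are nonnegative, it suffices to prove $\sum_{k}S_k=o(\mathcal{A}_N)$, where
$$
S_k:=\frac{1}{\sum_{n\in\mathbb{N}}f(n)^2}\sum_{n\in\mathcal{M}_k}\frac{f(n)h(n)}{n^{\sigma}}\sum_{q\mid n}f(q)q^{\sigma}.
$$
Fix $k$ and put $\omega_k(n):=\#\{p\mid n:p\in\mathcal{P}_k\}$. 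By the definition of $\mathcal{M}_k$ one has $\mathbf{1}_{n\in\mathcal{M}_k}\le t^{\,\omega_k(n)-\Delta_k}$ for every $t\ge1$ and every $n\in\mathrm{supp}(f)$. Substituting this and expanding the resulting sum into an Euler product over $\mathcal{P}$ exactly as in the evaluation of $\mathcal{A}_N$ — the factor $t^{\omega_k(n)}$ multiplies the local factor at each $p\in\mathcal{P}_k$ by $t$ and is trivial at all other primes — then dividing by $\sum_n f(n)^2=\prod_{p\in\mathcal{P}}(1+f(p)^2)$ and using $\tfrac{1+ty}{1+y}\le e^{(t-1)y}$ for $y\ge0$, I arrive at
$$
S_k\le t^{-\Delta_k}\,\mathcal{A}_N\,\exp\!\big((t-1)C_k\big),\qquad C_k:=\sum_{p\in\mathcal{P}_k}\big(f(p)^2h(p)+f(p)h(p)p^{-\sigma}\big).
$$

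The heart of the matter is a uniform upper bound on $C_k$. For $p\in\mathcal{P}_k$ we have $\log p-\log_2 N-\log_3 N\in(k,k+1]$, so, using $h(p)\le1$ and the definition of $f(p)$,
$$
f(p)^2h(p)\le\frac{(\log N)^{2-2\sigma}(\log_2 N)^{2\sigma}}{(\log_3 N)^{2-2\sigma}}\cdot\frac{1}{k^{2}\,p^{2\sigma}};
$$
by the estimate underlying Proposition \ref{prop3.1} (namely \cite[Lemma 9]{chirre2019extreme}) the sum $\sum_{p\in\mathcal{P}_k}p^{-2\sigma}$ is at most $(1+o(1))(\log_2 N)^{-2\sigma}$ uniformly for $1\le k\le(\log_2 N)^{\gamma}$, and a cheaper variant of the same computation shows the $f(p)h(p)p^{-\sigma}$ terms contribute to $C_k$ only a quantity of strictly smaller order (it is governed by $(\log N)^{1-\sigma}$ rather than $(\log N)^{2-2\sigma}$). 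Hence
$$
C_k\le(1+o(1))\,\frac{(\log N)^{2-2\sigma}}{k^{2}(\log_3 N)^{2-2\sigma}}.
$$
Since $\Delta_k=\dfrac{a(\log N)^{2-2\sigma}}{k^{2}(\log_3 N)^{2-2\sigma}}$ with $a>e-1$ fixed, taking $t=e$ yields, for $N$ large and some fixed $\eta>0$,
$$
S_k\le\mathcal{A}_N\exp\!\big(-(\Delta_k-(e-1)C_k)\big)\le\mathcal{A}_N\exp\!\left(-\eta\,\frac{(\log N)^{2-2\sigma}}{k^{2}(\log_3 N)^{2-2\sigma}}\right);
$$
this is precisely where the requirement $a>e-1$ from the construction of $\mathcal{M}_k$ is used.

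It remains to sum over $k$. Because $(\log N)^{2-2\sigma}=e^{-2A}\log N$ and $(\log_3 N)^{2-2\sigma}=(1+o(1))\log_3 N$, and $1\le k\le(\log_2 N)^{\gamma}$, each exponent above is at most $-\eta'\log N\big/\big((\log_2 N)^{2\gamma}\log_3 N\big)$ for some fixed $\eta'>0$, a quantity tending to $+\infty$ faster than any power of $\log_2 N$; summing over the at most $(\log_2 N)^{\gamma}$ values of $k$,
$$
\sum_{k}S_k\le\mathcal{A}_N\,(\log_2 N)^{\gamma}\exp\!\left(-\eta'\,\frac{\log N}{(\log_2 N)^{2\gamma}\log_3 N}\right)=o(\mathcal{A}_N),
$$
which proves the proposition. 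The one delicate point — the step I expect to require real care — is the uniform bound on $C_k$: one must combine the lower bound $\log p-\log_2 N-\log_3 N>k$ valid on $\mathcal{P}_k$ with the sharp order of $\sum_{p\in\mathcal{P}_k}p^{-2\sigma}$ from \cite[Lemma 9]{chirre2019extreme} so that the leading constant stays strictly below $a/(e-1)$; the Rankin/Euler-product manipulation and the final summation over $k$ are then routine.
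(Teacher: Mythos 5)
Your structural plan is the same as the paper's: isolate the bad sets $\mathcal{M}_k$, bound the contribution of each by a Rankin-type exponential-moment inequality in the number $\omega_k$ of prime factors from $\mathcal{P}_k$, and then sum over $k$. You streamline the paper's argument by applying Rankin directly to the full weighted sum and keeping $h$ (the paper first drops $h\le 1$, factors off the $\mathcal{M}_k'$-part, separates a harmless product $\prod_{p\in\mathcal{P}_k}\bigl(1+1/(f(p)p^\sigma)\bigr)$, and only then applies Rankin to $\sum_{n\in\mathcal{M}_k'}f(n)^2$). That repackaging is fine and the Euler-product identity $S_k\le t^{-\Delta_k}\mathcal{A}_N\exp\bigl((t-1)C_k\bigr)$ is correct.

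However, there is a genuine gap in the quantitative step. You claim that \cite[Lemma 9]{chirre2019extreme} gives $\sum_{p\in\mathcal{P}_k}p^{-2\sigma}\le(1+o(1))(\log_2 N)^{-2\sigma}$, but the paper invokes that same lemma to get the cruder bound $\sum_{p\in\mathcal{P}_k}p^{-2\sigma}\le(e-1+o(1))(\log_2 N)^{-2\sigma}$ (this is precisely why the construction requires $a>e-1$ and why $\gamma<(e-1)^{-1}$; see also the paper's closing remark about the ``slightly strange constant $(e-1)^{-1}$''). So the honest bound is $C_k\le(e-1+o(1))\Delta_k/a$, not $(1+o(1))\Delta_k/a$. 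With your fixed choice $t=e$, the exponent becomes $-(\Delta_k-(e-1)C_k)\le-\bigl(a-(e-1)^2+o(1)\bigr)\Delta_k/a$, which is \emph{not} negative when $e-1<a<(e-1)^2$ — a range the construction permits — so as written the estimate on $S_k$ can fail. The fix is exactly the paper's: do not commit to $t=e$, but take $t=b>1$ close to $1$, so that $a\log b-(b-1)(e-1)\sim(a-(e-1))(b-1)>0$ uses only $a>e-1$. With that single change (Rankin parameter $\to 1^+$), your argument goes through and recovers the paper's proof.
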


\begin{proof}
    Since $h(n) \leq 1,$ we obtain
    $$
    \frac{1}{\sum_{n \in \mathbb{N}} f(n)^2} \sum_{\substack{n \in \mathbb{N} \\ n \notin \mathcal{M}}} \frac{f(n) h(n)}{n^{\sigma}} \sum_{q \mid n} f(q)q^{\sigma}
    \leq \frac{1}{\sum_{n \in \mathbb{N}} f(n)^2} \sum_{\substack{n \in \mathbb{N} \\ n \notin \mathcal{M}}} \frac{f(n)}{n^{\sigma}} \sum_{q \mid n} f(q)q^{\sigma}.
    $$
    Next, following the similar way as \cite[Lemma 2]{bondarenko2017large}, we note that
    \begin{align*}
      & \frac{1}{\mathcal{A}_N \sum_{n \in \mathbb{N}} f(n)^2} \sum_{n \in \mathbb{N}, n \notin \mathcal{M}} \frac{f(n)}{n^{\sigma}} \sum_{q \mid n} f(q)q^{\sigma} \\
      & \quad \leq \frac{1}{\mathcal{A}_N \sum_{n \in \mathbb{N}} f(n)^2} \sum_{k=1}^{\lfloor(\log_2 N)^{\gamma}\rfloor} \sum_{n \in \mathcal{M}_k} \frac{f(n)}{n^{\sigma}} \sum_{q \mid n} f(q)q^{\sigma} \\
      & \quad =\sum_{k=1}^{\lfloor(\log_2 N)^{\gamma}\rfloor}\frac{1}{\prod_{p \in \mathcal{P}_k}\left(1+f(p)^2+f(p) p^{-\sigma}\right)} \sum_{n \in \mathcal{M}_k^{\prime}} \frac{f(n)}{n^{\sigma}} \sum_{q \mid n} f(q)q^{\sigma} \\
      & \quad \leq \sum_{k=1}^{\lfloor(\log_2 N)^{\gamma}\rfloor} \frac{1}{\prod_{p \in \mathcal{P}_k}\left(1+f(p)^2\right)} \sum_{n \in \mathcal{M}_k^{\prime}} f(n)^2 \prod_{p \in \mathcal{P}_k}\Big(1+\frac{1}{f(p)p^{\sigma}}\Big).
\end{align*}
By the definition of the set $\mathcal{P}_k$ and $k \leq \lfloor(\log_2 N)^{\gamma}\rfloor,$ we can get
\begin{align}
     \label{o1}
    \prod_{p \in \mathcal{P}_k}\Big(1+\frac{1}{f(p)p^{\sigma}}\Big) & \leq \bigg( 1+(k+1)\frac{(\log_3 N)^{1-\sigma}}{(\log N)^{1-\sigma}(\log_2 N)^{\sigma}}\bigg)^{e^{k+1}\log N} \nonumber \\
    &=\exp \bigg(o\bigg(\frac{(\log N)^{2-2\sigma}}{(\log _3 N)^{2-2\sigma}}\bigg) \frac{1}{k^2}\bigg).
\end{align}

\par
Then by the definition of the set $\mathcal{M}^{\prime}_{k}$ and the fact that $f(n)$ is a multiplicative function, we obtain
$$
\sum_{n\in\mathcal{M}^{\prime}_{k}} f(n)^2 \leqslant b^{-\Delta_k} \prod_{p\in \mathcal{P}_{k}} (1 + b f(p)^2)
$$
whenever $b>1.$ Thus,
$$
\frac{1}{\prod_{p \in \mathcal{P}_k}\left(1+f(p)^2\right)} \sum_{n \in \mathcal{M}_k^{\prime}} f(n)^2 \leq b ^{-\Delta_k} \exp \Big( (b - 1)\sum_{p\in \mathcal{M}^{\prime}_{k}}f(p)^2\Big).
$$
Recalling the definition of $f(p)$ for $p \in \mathcal{P},$ we have 
\begin{align*}
      \sum_{p \in \mathcal{P}_k} f(p)^2 
      & =\frac{(\log N)^{2 - 2\sigma} (\log_2  N)^{2\sigma}}{(\log_3 N)^{2 - 2\sigma}} \sum_{p \in \mathcal{P}_k} \frac{1}{p^{2\sigma}\left(\log p-\log _2 N-\log _3 N\right)^2}  \\
      & \leq \frac{(\log N)^{2 - 2\sigma} (\log_2  N)^{2\sigma}}{k^2 (\log_3 N)^{2 - 2\sigma}} \sum_{p \in \mathcal{P}_k} \frac{1}{p^{2\sigma}}.
      \end{align*}
According to \cite[Lemma 9]{chirre2019extreme}, 
$$
\sum_{p \in \mathcal{P}_k} f(p)^2 \leq \left((e-1)+o(1)\right) \frac{(\log N)^{2 - 2\sigma}}{k^2 (\log_3 N)^{2 - 2\sigma}}.
$$
\par
Combining with \eqref{o1}, 
\begin{align*}
    \frac{1}{\mathcal{A}_N \sum_{n \in \mathbb{N}} f(n)^2} & \sum_{n \in \mathbb{N}, n \notin \mathcal{M}} \frac{f(n)}{n^{\sigma}} \sum_{q \mid n} f(q)q^{\sigma} \\
    & \leq \sum_{k=1}^{\lfloor(\log_2 N)^{\gamma}\rfloor} \exp \Big(\big((e-1)(b-1)-a\log b+o(1)\big) \frac{(\log N)^{2 - 2\sigma}}{k^2 (\log_3 N)^{2 - 2\sigma}} \Big).
\end{align*}
Noticing that $a > e-1,$ we can choose $b$ to be sufficiently close to 1 such that the exponent is negative. The proof is completed.
\end{proof}

\begin{prop}
    \label{prop3.3}
    We have
    $$\frac{1}{\sum_{n \in \mathbb{N}} f(n)^2} \sum_{n \in \mathbb{N}} \frac{f(n) h(n)}{n^{\sigma}} \sum_{\substack{q \mid n \\ q \leq n/N^{\varepsilon} }} f(q)q^{\sigma}=o\left(\mathcal{A}_N\right) 
    $$
    as $N \to +\infty,$ where the implicit constant only depends on $\varepsilon.$
\end{prop}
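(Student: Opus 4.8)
The plan is to re-index the inner sum by the divisor $q$, write $n=qm$, and then run Rankin's trick against the constraint $m=n/q\geq N^{\varepsilon}$.

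First I would use that $f$ is supported on square-free integers whose prime factors lie in $\mcp$: for $q\mid n$ and $m=n/q$ the pair $(q,m)$ is coprime, $f(n)=f(q)f(m)$, $h(n)=h(q)h(m)$, the condition $q\leq n/N^{\varepsilon}$ is exactly $m\geq N^{\varepsilon}$, and every summand is nonnegative. Hence the sum in Proposition \ref{prop3.3} equals
$$\sum_{\substack{q,m\\(q,m)=1\\m\geq N^{\varepsilon}}} f(q)^2 h(q)\,\frac{f(m)h(m)}{m^{\sigma}},$$
and since $\mca_N\sum_{n\in\mbn}f(n)^2=\prod_{p\in\mcp}\big(1+f(p)^2h(p)+f(p)h(p)p^{-\sigma}\big)$, it suffices to show this expression is $o\big(\prod_{p\in\mcp}(1+f(p)^2h(p)+f(p)h(p)p^{-\sigma})\big)$.

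Next, fix a small $\delta>0$. For $m\geq N^{\varepsilon}$ we have $1\leq(m/N^{\varepsilon})^{\delta}$; inserting this factor, dropping the coprimality restriction (legitimate since all terms are $\geq 0$), and factoring the two resulting multiplicative sums as Euler products over $\mcp$ bounds the displayed sum by
$$N^{-\varepsilon\delta}\prod_{p\in\mcp}\big(1+f(p)^2h(p)\big)\prod_{p\in\mcp}\Big(1+\frac{f(p)h(p)}{p^{\sigma-\delta}}\Big).$$
The first product is $\leq\prod_{p\in\mcp}(1+f(p)^2h(p)+f(p)h(p)p^{-\sigma})=\mca_N\sum_n f(n)^2$, so it only remains to prove $N^{-\varepsilon\delta}\prod_{p\in\mcp}\big(1+f(p)h(p)p^{\delta-\sigma}\big)=o(1)$. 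Using $h(p)\leq 1$ and $1+x\leq e^{x}$, this reduces to an estimate of the shape $\sum_{p\in\mcp}f(p)p^{\delta-\sigma}\ll(\log N)^{\eta}$ for some fixed $\eta<1$, since then $N^{-\varepsilon\delta}\exp\big((\log N)^{\eta}\big)\to 0$ as $N\to\infty$.

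To obtain this last bound, note that every $p\in\mcp$ satisfies $\log p\leq(\log_2 N)^{\gamma}+\log_2 N+\log_3 N\leq 2\log_2 N$ for $N$ large, whence $p^{\delta}\leq(\log N)^{2\delta}$; also $\log p-\log_2 N-\log_3 N\geq 1$, so $f(p)\leq Cp^{-\sigma}$ with $C=(\log N)^{1-\sigma}(\log_2 N)^{\sigma}(\log_3 N)^{\sigma-1}=(\log N)^{1/2+o(1)}$. Consequently
$$\sum_{p\in\mcp}f(p)p^{\delta-\sigma}\leq C(\log N)^{2\delta}\sum_{p\in\mcp}p^{-2\sigma}\leq C(\log N)^{2\delta}\,\zeta(2\sigma)\ll(\log N)^{\frac{1}{2}+2\delta+o(1)},$$
using $\zeta(2\sigma)=\zeta\big(1+2A/\log_2 N\big)\ll\log_2 N$. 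Taking $\delta=\tfrac18$ makes the exponent $\tfrac34+o(1)<1$, which finishes the proof. The only delicate point is keeping this exponent strictly below $1$, which forces $\delta$ to be chosen small but fixed; everything else is routine manipulation of Euler products of the type already carried out in the proof of Proposition \ref{prop3.2}.
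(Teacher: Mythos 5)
Your proof is correct and takes essentially the same route as the paper, which simply defers to the Rankin's-trick argument of \cite[Lemma 3]{bondarenko2017large}; you have spelled out that argument in full (re-indexing $n=qm$, inserting $(m/N^{\varepsilon})^{\delta}$, factoring into Euler products over $\mathcal{P}$, and verifying $\sum_{p\in\mathcal{P}}f(p)p^{\delta-\sigma}\ll(\log N)^{3/4+o(1)}$ so that $N^{-\varepsilon\delta}$ wins). The only cosmetic difference is that the paper discards $h(n)\leq 1$ at the outset, while you retain $h$ and use $\prod_{p}(1+f(p)^2h(p))\leq\mathcal{A}_N\sum_n f(n)^2$ before bounding the remaining product; both are valid.
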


\begin{proof}
    Since $h(n) \leq 1,$ we obtain
    $$
    \frac{1}{\sum_{n \in \mathbb{N}} f(n)^2} \sum_{n \in \mathbb{N}} \frac{f(n) h(n)}{n^{\sigma}} \sum_{\substack{q \mid n \\ q \leq n/N^{\varepsilon} }} f(q)q^{\sigma} \leq \frac{1}{\sum_{n \in \mathbb{N}} f(n)^2} \sum_{n \in \mathbb{N}} \frac{f(n)}{n^{\sigma}} \sum_{\substack{q \mid n \\ q \leq n/N^{\varepsilon} }} f(q)q^{\sigma}.
    $$
    Given that $f(n)$ is a multiplicative function and $\sigma \geq 1/2,$ the proof can then be completed by invoking the same method as \cite[Lemma 3]{bondarenko2017large}.
\end{proof}

\subsection{Proof of Theorem \ref{thm1.1}}
For convenience, we let $\sigma = 1/2 + \sigma_A$, where $\sigma_A = A/\log_2 N$. 
\par
To use the resonance method, we define the following two sums
$$
\mathcal{S}_1:= \mathcal{S}_1(R_d, \chi_d) = \sum_{\substack{X<|d| \leq 2 X \\ d \in \mathcal{F}}} L\Big(\frac{1}{2}+\sigma_A, \chi_d\Big) R_d^2 
$$
and
$$
\mathcal{S}_2:= \mathcal{S}_2(R_d, \chi_d) = \sum_{\substack{X<|d| \leq 2 X \\ d \in \mathcal{F}}} R_d^2 .
$$
Plainly, we have 
\begin{equation}
    \label{maxL}
    \max_{\substack{X<|d| \leq 2X \\ d \in \mathcal{F}}} L\Big(\frac{1}{2}+\sigma_A, \chi_d\Big) \geq \frac{S_1}{S_2}.
\end{equation}
First, we need to deal with $\mathcal{S}_1$ and obtain its effective lower bound. Employing the definition of $R_d$ and Lemma \ref{lemmayoung}, we get 
\begin{align}
    \label{S1equation}
    \mathcal{S}_1 &=  \sum_{m, n \in \mathcal{M}} f(m) f(n) \sum_{\ell \geq 1} \frac{1}{{\ell}^{\frac{1}{2}+\sigma_A}} \sum_{\substack{X<|d| \leq 2 X \\ d \in \mathcal{F}}} \chi_d(\ell m n) U_{\sigma_A}\Big(\frac{\ell}{\sqrt{d}}\Big) \nonumber \\
    & \quad + Y_{\sigma_A} \sum_{m, n \in \mathcal{M}} f(m) f(n) \sum_{\ell \geq 1} \frac{1}{{\ell}^{\frac{1}{2}-\sigma_A}} \sum_{\substack{X<|d| \leq 2 X \\ d \in \mathcal{F}}} \chi_d(\ell m n) U_{-\sigma_A}\Big(\frac{\ell}{\sqrt{d}}\Big) \nonumber \\
    & := J_{\sigma_A} + Y_{\sigma_A} J_{\sigma_A}.
\end{align}
\par
To handle each term in \eqref{S1equation}, we estimate the integral $U_{\sigma_A} (x)$. Since the function $G(s)$ in Lemma \ref{lemmayoung} is bounded, we may without loss of generality assume that $G(1) = 1$. Using a similar method as \cite[Lemma 2.1]{soundararajan2000nonvanishing}, we can obtain the following proposition.
\begin{prop}
    \label{Uproperties}
    Let $U_\alpha(x)$ be the integral defined in Lemma \ref{lemmayoung}. Then $U_\alpha(x)$ is a real-valued function, and is smooth on the positive real axis $(0, + \infty).$ Moreover, for small $x, \,U_\alpha(x)= 1 + O(x^{1/2-\varepsilon}),$ and for large $x, \,\,U_\alpha(x) \ll e^{-x}.$
\end{prop}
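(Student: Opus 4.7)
The plan is to establish the four assertions by standard contour-shift techniques applied to the Mellin--Barnes representation of $U_\alpha(x)$. The workhorses throughout will be: $G(s)$ is bounded on any vertical strip, while Stirling's formula yields the decay $\Gamma((\tfrac{1}{2}+\alpha+s)/2)\ll e^{-\pi|t|/4}$ along vertical lines as $|t|\to\infty$. Together these guarantee absolute convergence and legitimize every deformation below.

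For the real-valued statement, I would parametrize $s=1+it$ and pair the contributions from $t$ and $-t$. For the admissible choices of $G$ (such as $G(s)=e^{s^2}$) one has $G(\overline{s})=\overline{G(s)}$; combined with the reflection $\overline{\Gamma(\overline{z})}=\Gamma(z)$ and $\overline{x^{-s}}=x^{-\overline{s}}$ for $x>0$, the integrand at $1-it$ is the complex conjugate of the integrand at $1+it$, so the integral is real. Smoothness on $(0,+\infty)$ then follows by differentiation under the integral sign: each $x$-derivative multiplies the integrand by $-s/x$, and the Stirling decay absorbs any polynomial in $|s|$, so every derivative exists and depends continuously on $x$.

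The small-$x$ asymptotic will be obtained by shifting the contour from $\operatorname{Re}(s)=1$ leftwards to $\operatorname{Re}(s)=-\tfrac{1}{2}+\varepsilon$. The only singularity crossed is the simple pole at $s=0$, whose residue equals
\[
\frac{G(0)}{\pi^{0}}\cdot\frac{\Gamma((\tfrac{1}{2}+\alpha)/2)}{\Gamma((\tfrac{1}{2}+\alpha)/2)}\cdot x^{0}=1,
\]
supplying the main term. The nearest $\Gamma$-pole sits at $s=-\tfrac{1}{2}-\alpha$, which lies to the left of the new contour as soon as $\varepsilon>|\alpha|$; since the proposition will be applied with $\alpha=\pm\sigma_A=o(1)$, this is harmless. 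On the shifted line $|x^{-s}|=x^{1/2-\varepsilon}$, and the $t$-integral converges by Stirling, producing the error $O(x^{1/2-\varepsilon})$.

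For large $x$, the plan is to shift the contour far to the right, to $\operatorname{Re}(s)=c$ with $c$ chosen as a function of $x$; no poles are encountered in this direction. On this line $|x^{-s}|=x^{-c}$ and Stirling gives $|\Gamma((\tfrac{1}{2}+\alpha+c+it)/2)|\ll\Gamma(c/2+O(1))e^{-\pi|t|/4}$. Integrating against $ds/s$, which contributes a harmless factor of $O(1/c)$ after exploiting the exponential decay in $t$, and then choosing $c=2x$, leaves a bound of the shape $x^{-x}e^{-x}$, which is far smaller than $e^{-x}$. The chief technical point is to verify that the Stirling constants are uniform as $\alpha$ varies over a small neighbourhood of $0$; this is the main (and rather mild) obstacle, handled by Stirling's formula with explicit remainder.
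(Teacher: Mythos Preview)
Your outline is exactly the standard contour-shifting argument that the paper invokes by citing \cite[Lemma~2.1]{soundararajan2000nonvanishing}; the paper gives no further details, so methodologically you match it. The real-valuedness, smoothness, and small-$x$ steps are fine as sketched.

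There is, however, a genuine gap in the large-$x$ step. You push the line of integration to $\operatorname{Re}(s)=c$ with $c=2x$ and estimate only the $\Gamma$-factor and $x^{-s}$, silently treating $G$ as harmless. But the hypothesis in Lemma~\ref{lemmayoung} is only that $G$ is bounded on each \emph{fixed} strip $|\operatorname{Re}(s)|\le M$, with no control on how the bound grows with $M$. For the admissible choice $G(s)=e^{s^{2}}$ one has $|G(c)|=e^{c^{2}}$, which overwhelms $x^{-c}\Gamma(c/2)$; in fact, computing the Mellin transform back shows that $U_\alpha(x)\ll e^{-x}$ is \emph{false} for this $G$ (the Mellin transform of any function with $e^{-x}$ decay is $O(\Gamma(c))$ on the real axis, whereas here it is $\asymp e^{c^{2}}\Gamma(c/2)/c$). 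So the claimed decay is not a property of $U_\alpha$ for arbitrary admissible $G$, and your argument cannot produce it without an extra ingredient.

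The fix is easy but must be stated: either take $G\equiv 1$ (or any $G$ with at most $\Gamma$-type growth on the positive real axis), in which case your choice $c\asymp x$ does give $U_\alpha(x)\ll e^{-x}$---indeed far better---or record explicitly that one is free to choose $G$ and that the proposition is asserted for a specific convenient choice. The paper in effect does the latter, remarking before the proposition that one may pin down $G$.
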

\par
Using Lemma \ref{lemmadarbar}, we divide the inner sum of $J_{\sigma_A}$ into two cases: $\ell m n =\square$ and $\ell m n \neq \square.$ Then by the upper bound of $U_{\sigma_A}(x)$ in Proposition \ref{Uproperties} we obtain
\begin{equation}
    J_{\sigma_A}=  \frac{X}{\zeta(2)} \sum_{m, n \in \mathcal{M}} f(m)f(n) \sum_{\substack{\ell \geq 1 \\
\ell m n=\square}} \frac{1}{\ell^{\frac{1}{2}+\sigma_A}} \prod_{p \mid \ell m n}\Big(\frac{p}{p+1}\Big) \int_1^2 U_{\sigma_A}\Big(\frac{\ell}{\sqrt{X t}}\Big) \mathrm{d}t +O\left(\mathcal{E}\right),
\end{equation}
where the error term is
$$
\mathcal{E}=X^{\frac{1}{2}+\varepsilon} \sum_{\ell \geq 1} \frac{e^{-\ell X^{\frac{1}{2}}}}{\ell^{\frac{1}{2}+\sigma_A}} \sum_{\substack{m, n \in \mathcal{M} \\
\ell m n=k_1 k_2^2 \\
\mu\left(k_1\right) \neq 0}} f(m) f(n) g_1\left(k_1\right) g_2\left(k_2\right).
$$
Since \eqref{factorMbound}, we have the following upper bound 
\begin{equation}
    \label{g1upperbound}
    g_1(k_1) =\exp \big(\left(\log k_1\right)^{1-\varepsilon}\big) \ll \exp \Big(\Big(\frac{\log N \log _2 N}{\log _3 N}\Big)^{1-\varepsilon}\Big) \ll N^{\frac{\varepsilon}{2}}.
\end{equation}
Furthermore, it follows from the definition of the Möbius function that $g_2$ is a multiplicative function and satisfies $g_2(xy)\leq g_2(x)g_2(y)$ for $x,y\geq 1.$ Thus, $\ell m n=k_1 k_2^2$ yields that $g_2(k_2)\leq g_2(\ell)g_2(m)g_2(n).$ By \cite[Eq. (12)]{darbar2025large}, for all $m \in \mathcal{M},$ we get
\begin{equation}
    \label{g2upperbound}
    g_2(m)=\sum_{q \mid m} \frac{\mu^2(q)}{q^{1 / 2+\varepsilon}} \ll \exp\Big(\sum_{p \leq \exp(\log_2 N)^{\gamma}\log N \log_2 N} p^{-(\frac{1}{2}+\varepsilon)} \Big) \ll N^{\frac{\varepsilon}{2}}.
\end{equation}
\par
Combining \eqref{g1upperbound}, \eqref{g2upperbound} and using the Cauchy-Schwarz inequality, the error term 
\begin{equation}
    \label{mathcalEupper}
    \mathcal{E} \ll X^{\frac{1}{2}+\varepsilon} N^{\frac{3\varepsilon}{2}}|\mathcal{M}|\Big(\sum_{\ell \geq 1} \frac{e^{-\ell X^{-\frac{1}{2}}} }{\ell^{\frac{1}{2}+\sigma_A}}g(\ell)\Big)\Big(\sum_{n \in \mathcal{M}} f(n)^2\Big).
\end{equation}
Applying the trivial bound of $g_2(\ell),$ we have
$$
\sum_{\ell \geq 1} \frac{e^{-\ell X^{-\frac{1}{2}}}}{\ell^{\frac{1}{2}+\sigma_A}}g(\ell) \ll X^{\varepsilon} \sum_{\ell \leq X^{\frac{1}{2}+\varepsilon}} \ell^{-\frac{1}{2}} \ll X^{\frac{1}{4}+\varepsilon} .
$$
Returning to \eqref{mathcalEupper}, we obtain an effective upper bound for the error term as follows
\begin{equation}
    \mathcal{E} \ll X^{\frac{3}{4}+2\varepsilon} N^{\frac{3\varepsilon}{2}}|\mathcal{M}|\Big(\sum_{n \in \mathcal{M}} f(n)^2\Big).
\end{equation}
\par
Next, we consider the main term of $J_{\sigma_A}.$ Since both functions $f(n)$ and $U_{\sigma_A}(x)$ are always positive, we crudely pick out all terms of the form $\ell m \neq n$ and get
$$
J_{\sigma_A} \geq \frac{X}{\zeta(2)} \sum_{n \in \mathcal{M}} \frac{f(n)}{n^{\frac{1}{2}+\sigma_A}} \prod_{p \mid n}\Big(\frac{p}{p+1}\Big) \sum_{m \mid n} f(m) m^{\frac{1}{2}+\sigma_A} \int_1^2 U_{\sigma_A}\Big(\frac{n}{m \sqrt{X t}}\Big) d t  +O\left(\mathcal{E}\right).
$$
Utilizing Proposition \ref{Uproperties} gives 
\begin{equation}
    \label{Jalpha}
    J_{\sigma_A} \geq \frac{X}{\zeta(2)} \sum_{n \in \mathcal{M}} \frac{f(n) h(n)}{n^{\frac{1}{2}+\sigma_A}} \sum_{\substack{m \mid n \\ m \geq n/N^{\varepsilon}}} f(m)m^{\frac{1}{2}+\sigma_A} + O\Big(X^{\frac{3}{4}+2\varepsilon} N^{1+ \frac{3\varepsilon}{2}} \Big(\sum_{n \in \mathcal{M}} f(n)^2\Big)\Big).
\end{equation}
\par
Using arguments similar to those above, we can estimate the lower bound of $J_{-\sigma_A}$ and get
\begin{align}
    \label{Jfualphalower}
    J_{-\sigma_A} \geq  \frac{X}{\zeta(2)} \sum_{n \in \mathcal{M}} \frac{f(n) h(n)}{n^{\frac{1}{2}-\sigma_A}} \sum_{\substack{m \mid n \\ m \geq n/N^{\varepsilon}}} f(m)m^{\frac{1}{2}-\sigma_A}
    + O\Big(X^{\frac{3}{4}+\frac{\sigma_A}{2}+2\varepsilon} N^{1+ \frac{3\varepsilon}{2}} \Big(\sum_{n \in \mathcal{M}} f(n)^2\Big)\Big).
\end{align}

For $n \in \mathcal{M}$, we have $n^{2\sigma_A} \geq 1.$ And for $m \mid n,$ we have $m \in \mathcal{M}.$ Furthermore, \eqref{factorMbound} yields that
$$
m^{-2\sigma_A} \gg \exp \Big(-2A\frac{\log N}{\log_3 N} \Big).
$$
Combining with \eqref{Jfualphalower}, we obtain
\begin{align}
    \label{Jfualpha}
    J_{-\alpha} & \gg \frac{X}{\zeta(2)} \exp \Big(-2A\frac{\log N}{\log_3 N} \Big) \sum_{n \in \mathcal{M}} \frac{f(n) h(n)}{n^{\frac{1}{2}+\sigma_A}} \sum_{\substack{m \mid n \\ m \geq n/N^{\varepsilon}}} f(m)m^{\frac{1}{2}+\sigma_A} \nonumber \\
    & + O\Big(X^{\frac{3}{4}+\frac{\sigma_A}{2}+2\varepsilon} N^{1+ \frac{3\varepsilon}{2}} \Big(\sum_{n \in \mathcal{M}} f(n)^2\Big)\Big).
\end{align}
\par
Thus \eqref{S1equation}, \eqref{Jalpha} and \eqref{Jfualpha} give
\begin{align*}
    \mathcal{S}_1 & \gg \Big( 1+ Y_{\sigma_A} \exp \Big(-2A\frac{\log N}{\log_3 N} \Big) \Big) \frac{X}{\zeta(2)}  \sum_{n \in \mathcal{M}} \frac{f(n) h(n)}{n^{\frac{1}{2}+\sigma_A}} \sum_{\substack{m \mid n \\ m \geq n/N^{\varepsilon}}} f(m)m^{\frac{1}{2}+\sigma_A} \\
    & + O\Big(X^{\frac{3}{4}+\frac{\sigma_A}{2}+2\varepsilon} N^{1+ \frac{3\varepsilon}{2}} \Big(\sum_{n \in \mathcal{M}} f(n)^2\Big)\Big).
\end{align*}
Through simple calculations, it can be seen that $ Y_{\sigma_A}$ is bounded. Similarly to \cite{darbar2025large}, choosing $N \asymp X^{\left(1/4 + \sigma_A /2 -5 \varepsilon\right)}$, where $0 < \varepsilon < 1/20$ is arbitrary small, and using Proposition \ref{prop3.1}, Proposition \ref{prop3.2} and Proposition \ref{prop3.3}, we have
\begin{align}
     \label{S1lowerbound}
     \mathcal{S}_1 &\geq (1+o(1)) \frac{X}{\zeta(2)} \nonumber \\
     & \cdot \exp \bigg( \Big(\delta \gamma \Big(\frac{1}{4} + \frac{\sigma_A}{2}- 5 \varepsilon \Big)^{1-\sigma}+o(1)\Big) \frac{\left(\log N  \right)^{1-\sigma} \left(\log_3 N\right)^{\sigma}}{\left(\log_2 N\right)^{\sigma}}\bigg) \Big(\sum_{n \in \mathcal{M}} f(n)^2\Big).
\end{align}
\par
Following the same process as in \cite{darbar2025large}, we can obtain the upper bound of $\mathcal{S}_2$ as follows
\begin{equation}
   \label{S2upperbound}
   \mathcal{S}_2 \leq(1+o(1)) \frac{X}
 {\zeta(2)} \Big(\sum_{n \in \mathcal{M}} f(n)^2\Big).
\end{equation}
Thus, combining \eqref{maxL}, \eqref{S1lowerbound} and \eqref{S2upperbound}, we get
\begin{align*}
     \max_{\substack{X<|d| \leq 2X \\ d \in \mathcal{F}}} L\left(\sigma, \chi_d\right)  & \geq  \exp \bigg( \Big(\delta \gamma \Big(\frac{1}{4} + \frac{\sigma_A}{2} - 5 \varepsilon \Big)^{1-\sigma}+o(1)\Big) \frac{\left(\log N  \right)^{1-\sigma} \left(\log_3 N\right)^{\sigma}}{\left(\log_2 N\right)^{\sigma}}\bigg). \\
     & \geq  \exp \bigg( \Big(\delta \gamma \Big(\frac{1}{4} + \frac{\sigma_A}{2} - 5 \varepsilon \Big)^{1-\sigma} e^{-A} +o(1)\Big) \sqrt{\frac{\log X \log_3 X}{\log_2 X}}\bigg).
\end{align*}

Choosing
$\delta = 1 + o(1), \gamma = (e-1)^{-1} + o(1)$
and noticing that for sufficiently large $N,$
$$
\Big(\frac{1}{4} + \frac{\sigma_A}{2} - 5 \varepsilon \Big)^{1-\sigma} = \frac{1}{2} + o(1),
$$
we complete the proof of Theorem \ref{thm1.1}.

\begin{rem}
    Our Lemma \ref{lemmayoung} is derived from \cite[Proposition 2.1]{young2009}, which is a simplified version. A stronger one can be found in \cite[Theorem 5.3]{iwaniec2004analytic}.
\end{rem}
\begin{rem}
    In the proof of Proposition \ref{prop3.2}, we employed a somewhat crude estimate \cite[Lemma 9]{chirre2019extreme}, which led to a slightly strange constant $(e-1)^{-1}.$ If a more refined estimate of $\sum p^{-2\sigma}$ could be made, the coefficient in the exponent in Theorem \ref{thm1.1} would be slightly improved.
\end{rem}
\begin{rem}
    In the proof of Proposition \ref{Uproperties}, we assumed that the function $G(s)$ in Lemma \ref{lemmayoung} satisfies $G(1) = 1.$ This is because $G(s)$ is bounded, so regardless of its value, it will be absorbed by the term $o(1)$ in the exponent in \eqref{thm1}.
\end{rem}

\bibliographystyle{siam}
\bibliography{reference}
\end{document}